\documentclass[reqno]{amsart}
\usepackage[dvipsnames]{xcolor}
\usepackage{amssymb,amscd,bbm,verbatim,graphicx,mathtools,enumitem}
\usepackage{textcomp}
\usepackage[mathscr]{euscript}
\usepackage[colorlinks]{hyperref}
  \usepackage{setspace}
\newtheorem{thm}{Theorem}[section]

\theoremstyle{definition}
\newtheorem{defn}[thm]{Definition}

\newtheorem{isp}[thm]{The Invariant Subspace Problem}
\numberwithin{equation}{section}



\newcommand{\SP}{\operatorname{span}}
\begin{document}
\onehalfspacing
\title[]{Refuting a recent proof of the Invariant Subspace Problem}%
\author{Ahmed Ghatasheh}%

\address{Department of Mathematics, Philadelphia University, P.O.Box: 1 Amman - Jordan 19392}
\email{aghatasheh@philadelphia.edu.jo}
\email{med.ghatasheh@gmail.com}

\date{\today}%


\keywords{}

\begin{abstract}
This article demonstrates that the recent proof of the invariant subspace problem, as presented by Khalil et al., is incorrect.
\end{abstract}
\maketitle

This article consists of two sections. The first section provides an introduction to the basics of Hilbert spaces, while the second section discusses the proof of the invariant subspace problem presented by Khalil et al. \cite{axioms13090598}. Readers who are already familiar with the fundamentals of Hilbert spaces may choose to focus on Theorems \ref{86w53gbd763} and \ref{htsio937ys} in the first section and then proceed to the second section, as these theorems are crucial for demonstrating the incorrectness of Khalil et al.'s proof \cite{axioms13090598}.

\section{Basics of Hilbert Spaces}

We assume that all vector spaces are defined over the complex numbers. We denote the set of complex numbers by $\mathbb{C}$, and the set of natural numbers by $\mathbb{N}$.

Given a nonempty subset $A$ of a vector space $V$, we use the notation $\SP\left(A\right)$ for the subspace generated by $A$. Recall that $\SP\left(A\right)$ is the intersection of all subspaces of $V$ that contain $A$. Given $\alpha_1,\alpha_2,\dots,\alpha_n$ in $\mathbb{C}$ and $u_1,u_2,\dots,u_n$ in $A$, we call $\sum_{j=1}^n\alpha_ju_j$ a finite linear combination of vectors from $A$. Recall that $\SP\left(A\right)$ consists of all possible finite linear combination of vectors from $A$.

The vector space $V$ is said to be finite-dimensional if $V=\SP\left(A\right)$ for some finite set $A\subset V$. Otherwise, we say that $V$ is infinite-dimensional.

An inner product on a vector space $V$ is a function that assigns to each pair $(u,v)\in V\times V$ a complex number $\langle u,v\rangle$ such that for all $u$, $v$, and $w$ in $V$ and all $\alpha$ and $\beta$ in $\mathbb{C}$ we have
\begin{enumerate}[leftmargin=.65cm]
\item $\langle\alpha u+\beta v,w\rangle=\alpha\langle u,w\rangle+\beta\langle v,w\rangle$,
\item $\overline{\langle u,v\rangle}=\langle v,u\rangle$, and
\item $\langle u,u\rangle\geq0$ with equality only if $u=0$.
\end{enumerate}

By saying that $V$ is an inner product space, we mean that $V$ is a vector space equipped with an inner product $\langle .,.\rangle$. For any $u\in V$, we define $\|u\|=\sqrt{\langle u,u\rangle}$. Recall that $|\langle u,v\rangle|\leq\|u\|\|v\|$ for any $u$ and $v$ in $V$. This is known as the Cauchy-Schwarz inequality. For any $u$ and $v$ in $V$ and any $\alpha\in\mathbb{C}$, the following statements hold:
\begin{enumerate}[leftmargin=.65cm]
\item $\|u\|\geq0$ with equality if and only if $u=0$.
\item $\|\alpha u\|=|\alpha|\|u\|$.
\item $\|u+v\|\leq\|u\|+\|v\|$.
\end{enumerate}
Thus, $\|.\|$ defines a norm on $V$, and this norm induces a topology on $V$, where the basic open sets are $\{y\in V:\|y-x\|<\epsilon\}$ for any $x\in V$ and $\epsilon>0$.

A finite sequence $\left(u_j\right)_{j=1}^n$ in a vector space $V$ is said to be linearly independent if the only solution to the equation $\sum_{j=1}^n\alpha_ju_j=0$ is $\alpha_1=\alpha_2=\dots=\alpha_n=0$.

If $V$ has an inner product $\langle.,.\rangle$, the sequence $\left(u_j\right)_{j=1}^n$ is said to be orthogonal if $\langle u_j,u_k\rangle=0$ for all $1\leq j<k\leq n$. Finally, it is said to be orthonormal if it is orthogonal and $\|u_j\|=1$ for all $1\leq j\leq n$. The following are some important observations that can be verified directly.
\begin{enumerate}[leftmargin=.65cm]
\item If $\left(u_j\right)_{j=1}^n$ is an orthogonal sequence of nonzero vectors in $V$, then it is linearly independent.
\item If $\left(u_j\right)_{j=1}^n$ is orthonormal, then $\|\sum_{j=1}^n\alpha_ju_j\|^2=\sum_{j=1}^n|\alpha_j|^2$ for any sequence $\left(\alpha_j\right)_{j=1}^n$ in $\mathbb{C}$. 
\end{enumerate}

\begin{defn}[Independence, Orthogonality, and Orthonormality of Sequences]
A sequence $(u_j)_{j=1}^{\infty}$ in a vector space $V$ is said to be linearly independent if $\left(u_j\right)_{j=1}^n$ is linearly independent for each $n\in\mathbb{N}$. If $V$ is an inner product space, then $(u_j)_{j=1}^{\infty}$ is said to be orthogonal if $\langle u_j,u_k\rangle=0$ for all $j\neq k$ in $\mathbb{N}$. Finally, it is said to be orthonormal if it is orthogonal and $\|u_j\|=1$ for all $j\in\mathbb{N}$.
\end{defn}

If $V$ is an infinite-dimensional vector space, then it is straightforward to verify that it contains a linearly independent sequence $(u_j)_{j=1}^{\infty}$.

\begin{thm}[Gram-Schmidt Process]\label{hdgtyu9276d}
Let $(u_j)_{j=1}^{\infty}$ be a linearly independent sequence in an infinite-dimensional inner product space $V$. Set $w_1=u_1$ and
$$w_{n+1}=u_{n+1}-\sum_{j=1}^n\frac{\langle u_{n+1},w_j\rangle}{\|w_j\|^2}  w_j,$$
for each $n\in\mathbb{N}$. Then the following statements hold:
\begin{enumerate}[leftmargin=0.65cm]
\item $(w_j)_{j=1}^{\infty}$ is an orthogonal sequence of nonzero vectors in $V$.
\item $\SP\left(\{u_1,u_2,\dots,u_n\}\right)=\SP\left(\{w_1,w_2,\dots,w_n\}\right)$ for all $n\in\mathbb{N}$.
\item $\SP\left(\{u_1,u_2,\dots\}\right)=\SP\left(\{w_1,w_2,\dots\}\right)$.
\item If we set $e_j=w_j/\|w_j\|$ for each $j\in\mathbb{N}$, then $\left(e_j\right)_{j=1}^{\infty}$ is an orthonormal sequence that satisfies $\SP\left(\{u_1,u_2,\dots,u_n\}\right)=\SP\left(\{e_1,e_2,\dots,e_n\}\right)$ for all $n\in\mathbb{N}$ and $\SP\left(\{u_1,u_2,\dots\}\right)=\SP\left(\{e_1,e_2,\dots\}\right)$.
\end{enumerate}
\end{thm}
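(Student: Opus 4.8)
The plan is to establish (1) and (2) simultaneously by induction on $n$, since the proof that each $w_{n+1}$ is nonzero will rely on the span identity for the previous stage, while that span identity in turn presupposes that the denominators $\|w_j\|^2$ appearing in the recursion are nonzero. Concretely, I would prove by induction the joint statement $P(n)$: the vectors $w_1,\dots,w_n$ are nonzero and pairwise orthogonal, and $\SP\left(\{u_1,\dots,u_n\}\right)=\SP\left(\{w_1,\dots,w_n\}\right)$. The base case $n=1$ is immediate: $w_1=u_1\neq0$ because a single-term subsequence of a linearly independent sequence cannot be zero, and $\SP\left(\{u_1\}\right)=\SP\left(\{w_1\}\right)$ trivially.

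For the inductive step, assume $P(n)$. First I would note that the recursion for $w_{n+1}$ is well defined, since each $\|w_j\|^2\neq0$ by hypothesis. I would then compute, for each $k\leq n$,
\[
\langle w_{n+1},w_k\rangle=\langle u_{n+1},w_k\rangle-\sum_{j=1}^n\frac{\langle u_{n+1},w_j\rangle}{\|w_j\|^2}\langle w_j,w_k\rangle,
\]
and use the orthogonality of $w_1,\dots,w_n$ to collapse the sum to its single surviving term, which exactly cancels $\langle u_{n+1},w_k\rangle$; hence $w_{n+1}$ is orthogonal to $w_k$ for all $k\leq n$.

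The main obstacle is showing $w_{n+1}\neq0$, and this is where the two parts of the induction must be carried together. I would argue by contradiction: if $w_{n+1}=0$, then the recursion exhibits $u_{n+1}$ as a linear combination of $w_1,\dots,w_n$, so $u_{n+1}\in\SP\left(\{w_1,\dots,w_n\}\right)=\SP\left(\{u_1,\dots,u_n\}\right)$ by the inductive hypothesis, contradicting the linear independence of $u_1,\dots,u_{n+1}$. The span identity at stage $n+1$ then follows from two inclusions: the recursion writes $w_{n+1}$ as a combination of $u_{n+1}$ and $w_1,\dots,w_n$ (hence of $u_1,\dots,u_{n+1}$ via $P(n)$), while solving the recursion for $u_{n+1}$ writes it as a combination of $w_1,\dots,w_{n+1}$; combined with $P(n)$ this yields $\SP\left(\{u_1,\dots,u_{n+1}\}\right)=\SP\left(\{w_1,\dots,w_{n+1}\}\right)$, completing the induction and establishing (1) and (2).

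Finally, (3) follows from (2) by a finiteness observation: every element of $\SP\left(\{u_1,u_2,\dots\}\right)$ is a finite linear combination involving only $u_1,\dots,u_n$ for some $n$, hence lies in $\SP\left(\{w_1,\dots,w_n\}\right)\subseteq\SP\left(\{w_1,w_2,\dots\}\right)$, and the reverse inclusion is symmetric. For (4), the vectors $e_j=w_j/\|w_j\|$ are well defined by (1); orthonormality is the direct computation $\langle e_j,e_k\rangle=\langle w_j,w_k\rangle/(\|w_j\|\|w_k\|)$, which equals $0$ for $j\neq k$ and $1$ for $j=k$; and since scaling nonzero vectors by nonzero scalars leaves their span unchanged, the span identities for $\left(e_j\right)$ are inherited verbatim from those already proved for $\left(w_j\right)$.
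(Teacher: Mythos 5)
Your proof is correct, and there is nothing in the paper to compare it against: the paper states the Gram--Schmidt theorem as standard background without giving any proof. Your joint induction is exactly the right structure for this result --- carrying the orthogonality/nonvanishing of $w_1,\dots,w_n$ and the span identity together is necessary, since the denominators $\|w_j\|^2$ in the recursion are only known to be nonzero via the inductive hypothesis, and the nonvanishing of $w_{n+1}$ in turn needs the span identity to contradict linear independence. The deductions of (3) from (2) via finiteness of linear combinations and of (4) by rescaling are also complete.
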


It follows from the Gram-Schmidt process that every infinite-dimensional inner product space contains an orthonormal sequence.

\begin{thm}[Finite-Dimensional Subspaces]
If $S$ is a finite-dimensional subspace of an inner product space $V$, then it is closed.
\end{thm}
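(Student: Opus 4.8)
The plan is to show that a finite-dimensional subspace $S$ contains all of its limit points, so that its complement is open. Let $m$ be the dimension of $S$. By the Gram-Schmidt process (Theorem \ref{hdgtyu9276d}) applied to any basis of $S$, I may assume $S$ has an orthonormal basis $(e_j)_{j=1}^m$, since Gram-Schmidt preserves spans at each finite stage. This reduces everything to working with the clean coordinate formula that orthonormality provides.

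First I would take an arbitrary point $x$ in the closure of $S$ and produce a candidate limit in $S$. The natural candidate is the orthogonal projection $p=\sum_{j=1}^m\langle x,e_j\rangle e_j$, which visibly lies in $S$. The key computation is that $x-p$ is orthogonal to each $e_k$: using linearity in the first slot and $\langle e_j,e_k\rangle=\delta_{jk}$, one gets $\langle x-p,e_k\rangle=\langle x,e_k\rangle-\langle x,e_k\rangle=0$, hence $x-p$ is orthogonal to every vector in $S$.

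Next I would exploit that $x$ is a limit of points of $S$. Given $\epsilon>0$, choose $s\in S$ with $\|x-s\|<\epsilon$. Writing $x-p=(x-s)+(s-p)$ with $s-p\in S$, orthogonality of $x-p$ to $S$ gives, via the Pythagorean relation coming from observation (2) on orthonormal sequences, the bound $\|x-p\|^2\leq\|x-p\|^2+\|s-p\|^2=\|(x-s)+(s-p)\|^2=\|x-s\|^2<\epsilon^2$. Since $\epsilon$ was arbitrary, $\|x-p\|=0$, so $x=p\in S$.

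The main obstacle, and the only place requiring care, is justifying the Pythagorean step: one must verify that for orthogonal $a=x-p$ and $b=s-p$ one has $\|a+b\|^2=\|a\|^2+\|b\|^2$, which follows by expanding the inner product and using conjugate symmetry (axiom (2) of the inner product) to cancel the cross terms. Everything else is bookkeeping. I would therefore present the projection $p$, verify orthogonality of $x-p$ to $S$, and then close with the Pythagorean estimate to conclude $x\in S$, establishing that $S$ equals its closure and is thus closed.
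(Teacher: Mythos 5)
Your overall strategy---orthonormalize a basis of $S$ via Gram--Schmidt, project an arbitrary $x\in\overline{S}$ onto $S$, and show that $x$ equals its projection---is sound, and it is genuinely different from the paper, which gives no self-contained argument at all: the paper simply cites Theorem 2.4-3 of Kreyszig, a normed-space result whose standard proof runs through equivalence of norms on finite-dimensional spaces and completeness of the scalars. Your route is more elementary in the inner-product setting and uses only tools the paper itself develops (the Gram--Schmidt theorem and the norm identity for orthonormal combinations), so it is well adapted to this paper; the paper's cited result is more general, holding in any normed space.

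However, the key displayed chain in your third paragraph is wrong as written. You decompose $x-p=(x-s)+(s-p)$ and then apply the Pythagorean identity to the pieces $x-s$ and $s-p$; but these two vectors are not orthogonal to each other (it is $x-p$, not $x-s$, that is orthogonal to $S$). Indeed, since $(x-s)+(s-p)=x-p$, your chain $\|x-p\|^2+\|s-p\|^2=\|(x-s)+(s-p)\|^2=\|x-s\|^2$ asserts $\|x-p\|^2+\|s-p\|^2=\|x-p\|^2=\|x-s\|^2$, which is false unless $s=p$. The repair is to decompose the other vector: write $x-s=(x-p)+(p-s)$. Here $x-p$ is orthogonal to $S$ and $p-s\in S$, so the Pythagorean identity (which, as you note, follows by expanding the inner product and cancelling the cross terms) applies and gives $\|x-s\|^2=\|x-p\|^2+\|p-s\|^2\geq\|x-p\|^2$, hence $\|x-p\|\leq\|x-s\|<\epsilon$ for every $\epsilon>0$, so $x=p\in S$ as you conclude. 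With this one-line correction your proof is complete.
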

\begin{proof}
See Theorem 2.4-3 in Kreyszig \cite{2007introductory}.
\end{proof}
\begin{defn}[Convergence in Inner Product Spaces]
Let $(x_n)_{n=1}^{\infty}$ be a sequence in an inner product space $V$, and let $x\in V$.
\begin{enumerate}[leftmargin=0.65cm]
\item $(x_n)_{n=1}^{\infty}$ is said to be a Cauchy sequence if for each $\epsilon>0$, there exists $N_{\epsilon}\in\mathbb{N}$ such that $\|x_n-x_m\|<\epsilon$ for all $n,m\geq N_{\epsilon}$.
\item $(x_n)_{n=1}^{\infty}$ is said to converge to $x$ if $\lim_{n\to\infty}\|x_n-x\|=0$.
\item $\sum_{j=1}^{\infty}x_j$ is said to converge to $x$, written as $\sum_{j=1}^{\infty}x_j=x$, if the sequence $(\sum_{j=1}^{n}x_j)_{n=1}^{\infty}$ converges to $x$.
\item $(x_n)_{n=1}^{\infty}$ is said to converge weakly to $x$ if $\lim_{n\to\infty}\langle x_n,y\rangle=\langle x,y\rangle$ for every $y\in V$, or equivalently $\lim_{n\to\infty}\langle x_n-x,y\rangle=0$ for every $y\in V$
\end{enumerate}
\end{defn}

Any sequence in an inner product space $V$ can converge to at most one vector in $V$. A similar statement holds for any sequence that converges weakly. In addition,
if $(x_n)_{n=1}^{\infty}$ converges to $x$, then it converges weakly to $x$. This can be shown using the Cauchy-Schwarz inequality.

Finally, the inner product space $V$ is said to be a Hilbert space if every Cauchy sequence in $V$ converges to some vector in $V$.

\begin{thm}\label{6387ydjw}
Let $\left(e_n\right)_{n=1}^{\infty}$ be an orthonormal sequence in an infinite-dimensional inner product space $V$. Suppose that $\left(\alpha_n\right)_{n=1}^{\infty}$ is a sequence in $\mathbb{C}$ and that $u\in V$. The following statements hold:
\begin{enumerate}[leftmargin=0.65cm]
\item If $u=\sum_{j=1}^{\infty}\alpha_je_j$, then $\alpha_m=\langle u,e_m\rangle$ for all $m\in\mathbb{N}$ and $\sum_{j=1}^{\infty}|\alpha_j|^2<\infty$.
\item If $V$ is a Hilbert space and $\sum_{j=1}^{\infty}|\alpha_j|^2<\infty$, then $\sum_{j=1}^{\infty}\alpha_je_j$ is convergent.
\end{enumerate}
\end{thm}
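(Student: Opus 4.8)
The plan is to reduce both parts to statements about the partial sums $s_n=\sum_{j=1}^n\alpha_je_j$ and to exploit the two facts about orthonormal sequences already recorded, namely the identity $\|\sum_{j=1}^n\alpha_je_j\|^2=\sum_{j=1}^n|\alpha_j|^2$, together with the continuity of the norm and the fact that norm convergence implies weak convergence.

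For part (1), I would fix $m\in\mathbb{N}$ and compute $\langle s_n,e_m\rangle$ for $n\geq m$. By linearity of the inner product in its first argument and the orthonormality relations $\langle e_j,e_m\rangle=\delta_{jm}$, this inner product collapses to $\alpha_m$ for every $n\geq m$. Since $s_n\to u$ in norm, it converges weakly to $u$, so $\langle s_n,e_m\rangle\to\langle u,e_m\rangle$; taking the limit along $n\geq m$ yields $\alpha_m=\langle u,e_m\rangle$. For the summability of $(|\alpha_j|^2)_{j=1}^\infty$, I would apply the orthonormality identity to get $\|s_n\|^2=\sum_{j=1}^n|\alpha_j|^2$, and then use that $s_n\to u$ forces $\|s_n\|\to\|u\|$, via the reverse triangle inequality $\bigl|\,\|s_n\|-\|u\|\,\bigr|\leq\|s_n-u\|$. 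Hence the increasing partial sums $\sum_{j=1}^n|\alpha_j|^2$ converge to $\|u\|^2$, which is finite.

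For part (2), the strategy is to show that $(s_n)_{n=1}^\infty$ is a Cauchy sequence and then invoke the completeness of $V$. For $n>m$, the difference is $s_n-s_m=\sum_{j=m+1}^n\alpha_je_j$, so the orthonormality identity gives $\|s_n-s_m\|^2=\sum_{j=m+1}^n|\alpha_j|^2$. Because $\sum_{j=1}^\infty|\alpha_j|^2$ converges, its sequence of partial sums is Cauchy in $\mathbb{R}$; thus for each $\epsilon>0$ there is an $N$ with $\sum_{j=m+1}^n|\alpha_j|^2<\epsilon^2$ whenever $n>m\geq N$, which translates directly into $\|s_n-s_m\|<\epsilon$. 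Since $V$ is a Hilbert space, the Cauchy sequence $(s_n)_{n=1}^\infty$ converges, which is exactly the assertion that $\sum_{j=1}^\infty\alpha_je_j$ is convergent.

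Neither part presents a serious obstacle once the right tools are identified; the only point requiring care is the passage from norm convergence to the identity $\alpha_m=\langle u,e_m\rangle$ in part (1), where one must commute the limit with the inner product. This is precisely where the remark that norm convergence implies weak convergence does the work, so no separate continuity argument for the inner product is needed.
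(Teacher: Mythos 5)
Your proposal is correct and follows essentially the same route as the paper: both parts rest on the identity $\bigl\|\sum_{j=1}^n\alpha_je_j\bigr\|^2=\sum_{j=1}^n|\alpha_j|^2$, the first claim is obtained by computing $\langle s_n,e_m\rangle$ and passing to the limit via weak convergence, and the second by the same Cauchy-sequence argument. The only cosmetic difference is in the summability step of part (1), where you use continuity of the norm to get the slightly stronger conclusion $\sum_{j=1}^{\infty}|\alpha_j|^2=\|u\|^2$, while the paper only invokes boundedness of the convergent sequence of partial sums.
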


\begin{proof}
To prove the first claim, let $m\in\mathbb{N}$ be given. Observe that for any $n\geq m$, we have
$$\alpha_m=\left\langle\sum_{j=1}^{n}\alpha_je_j,e_m\right\rangle.$$
Thus
$$\alpha_m=\lim_{n\to\infty}\left\langle\sum_{j=1}^{n}\alpha_je_j,e_m\right\rangle=\langle u,e_m\rangle,$$
where the second equality is justified by the fact that $(\sum_{j=1}^{n}\alpha_je_j)_{n=1}^{\infty}$ converges weakly to $u$.

Since $u=\sum_{j=1}^{\infty}\alpha_je_j$, there exists $M>0$ such that $\|\sum_{j=1}^n\alpha_je_j\|^2\leq M$ for all $n\in\mathbb{N}$. Hence
$$\sum_{j=1}^n|\alpha_j|^2=\Big\|\sum_{j=1}^n\alpha_je_j\Big\|^2\leq M$$
for all $n$, implying that $\sum_{j=1}^{\infty}|\alpha_j|^2<\infty$.

To prove the second claim, assume that $\sum_{j=1}^{\infty}|\alpha_j|^2<\infty$. Since $H$ is a Hilbert space, it suffices to show that $\big(\sum_{j=1}^{n}\alpha_je_j\big)_{n=1}^{\infty}$ is a Cauchy sequence. If $p<q$ are in $\mathbb{N}$, then
$$\Big\|\sum_{j=1}^{q}\alpha_je_j-\sum_{j=1}^{p}\alpha_je_j\Big\|^2=\Big\|\sum_{j=p+1}^q\alpha_je_j\Big\|^2=\sum_{j=p+1}^q|\alpha_j|^2\leq\sum_{j=p}^{\infty}|\alpha_j|^2.$$
This completes the proof since $\lim_{p\to\infty}\sum_{j=p}^{\infty}|\alpha_j|^2=0$.
\end{proof}

\begin{thm}[Bessel's Inequality and Parseval's formula]\label{yduois64582}
Let $\left(e_n\right)_{n=1}^{\infty}$ be an orthonormal sequence in an infinite-dimensional inner product space $V$. For any $u\in V$, the following statements hold:
\begin{enumerate}[leftmargin=0.65cm]
\item $\sum_{j=1}^{\infty}|\langle u,e_j\rangle|^2\leq\|u\|^2$ (Bessel's Inequality).
\item $\sum_{j=1}^{\infty}|\langle u,e_j\rangle|^2=\|u\|^2$ if and only if $\sum_{j=1}^{\infty}\langle u,e_j\rangle e_j=u$ (Parseval's Formula).
\end{enumerate}
\end{thm}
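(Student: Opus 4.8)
The plan is to reduce both parts to a single finite-dimensional identity relating $\|u\|^2$, the partial sums of the series, and the ``tail'' $u-s_n$. Throughout I write $\alpha_j=\langle u,e_j\rangle$ and $s_n=\sum_{j=1}^n\alpha_je_j$ for each $n\in\mathbb{N}$, so that the quantities in both statements become $\sum_{j=1}^n|\alpha_j|^2$ and $\|s_n\|^2$.

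First I would establish the central identity
$$\|u-s_n\|^2=\|u\|^2-\sum_{j=1}^n|\alpha_j|^2\qquad\text{for every }n\in\mathbb{N}.$$
To obtain it, I would observe that $s_n$ behaves like the orthogonal projection of $u$ onto $\SP(\{e_1,\dots,e_n\})$, in the sense that $u-s_n$ is orthogonal to each $e_k$ with $1\leq k\leq n$: indeed $\langle u-s_n,e_k\rangle=\langle u,e_k\rangle-\alpha_k=0$ by orthonormality of $(e_j)_{j=1}^{\infty}$. Consequently $u-s_n$ is orthogonal to $s_n$, and expanding $\|u\|^2=\|(u-s_n)+s_n\|^2$ makes the cross terms vanish, giving $\|u\|^2=\|u-s_n\|^2+\|s_n\|^2$. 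Since $(e_j)_{j=1}^n$ is orthonormal, the second observation recorded before the definition of orthonormal sequences yields $\|s_n\|^2=\sum_{j=1}^n|\alpha_j|^2$, and rearranging produces the identity.

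With the identity in hand, Bessel's Inequality is immediate: since $\|u-s_n\|^2\geq0$, we get $\sum_{j=1}^n|\alpha_j|^2\leq\|u\|^2$ for every $n$, and letting $n\to\infty$ (the partial sums are nondecreasing and bounded) gives $\sum_{j=1}^{\infty}|\alpha_j|^2\leq\|u\|^2$. For Parseval's Formula I would pass to the limit $n\to\infty$ in the same identity: the left-hand side $\|u-s_n\|^2$ tends to $0$ precisely when $s_n\to u$, that is, when $\sum_{j=1}^{\infty}\alpha_je_j=u$, while the right-hand side tends to $\|u\|^2-\sum_{j=1}^{\infty}|\alpha_j|^2$. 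Hence $\sum_{j=1}^{\infty}\alpha_je_j=u$ holds if and only if $\sum_{j=1}^{\infty}|\alpha_j|^2=\|u\|^2$, which is the claimed equivalence.

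I do not anticipate a genuine obstacle, as the argument is essentially the single Pythagorean computation above. The only points needing care are the bookkeeping with the conjugate-linearity of the inner product in its second slot when verifying $\langle u-s_n,s_n\rangle=0$, and the harmless monotone-bounded-convergence remark that legitimizes taking the limit of $\sum_{j=1}^n|\alpha_j|^2$ in Bessel's inequality.
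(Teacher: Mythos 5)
Your proposal is correct and follows essentially the same route as the paper: both rest on the identity $\|u-s_n\|^2=\|u\|^2-\sum_{j=1}^n|\langle u,e_j\rangle|^2$ and read off both statements from it. The only cosmetic difference is that you obtain the identity via the orthogonality of $u-s_n$ to $s_n$ (a Pythagorean decomposition), while the paper expands $\|u-s_n\|^2$ directly using $\langle s_n,u\rangle=\langle u,s_n\rangle=\|s_n\|^2$; these are the same computation arranged in a different order.
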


\begin{proof}
For each $n\in\mathbb{N}$, set $s_n=\sum_{j=1}^n\langle u,e_j\rangle e_j$. Then
$$\langle s_n,u\rangle=\sum_{j=1}^n|\langle u,e_j\rangle|^2=\|s_n\|^2=\langle u,s_n\rangle.$$
Hence for each $n\in\mathbb{N}$, we have
$$
\|u-s_n\|^2=\|u\|^2-\langle u,s_n\rangle-\langle s_n,u\rangle+\|s_n\|^2=\|u\|^2-\sum_{j=1}^n|\langle u,e_j\rangle|^2.
$$
The above two claims follow directly from the last equation.
\end{proof}

\begin{defn}[Complete Orthonormal Sequences]\label{7780sh64d}
By a complete orthonormal sequence in an infinite dimensional inner product space $V$ we mean an orthonormal sequence $\left(e_n\right)_{n=1}^{\infty}$ for which $u=\sum_{j=1}^{\infty}\langle u,e_j\rangle e_j$ for all $u\in V$.
\end{defn}

\begin{defn}[Separable Inner Product Spaces]
An inner product space $V$ is said to be separable if it has a countable dense subset, i.e., there exists a countable set $S\subset V$ such that for each $x\in V$ there exists a sequence $(s_n)_{n=1}^{\infty}$ in $S$ (depending on $x$) such that $\lim_{n\to\infty}\|s_n-x\|=0$.
\end{defn}

\begin{thm}[Characterization of Separable Inner Product Spaces]
Let $V$ be an infinite-dimensional inner product space. Then $V$ is separable if and only if it contains a complete orthonormal sequence.
\end{thm}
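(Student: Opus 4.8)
The plan is to prove the two implications separately. For the easier direction, suppose $V$ contains a complete orthonormal sequence $(e_n)_{n=1}^{\infty}$. I would take $S$ to be the set of all finite linear combinations $\sum_{j=1}^{n} q_j e_j$ with $n\in\mathbb{N}$ and each $q_j$ having rational real and imaginary parts; this $S$ is countable. To see that $S$ is dense, fix $u\in V$ and $\epsilon>0$. Since the sequence is complete, the partial sums of $\sum_{j=1}^{\infty}\langle u,e_j\rangle e_j$ converge to $u$, so I can choose $N$ with $\|u-\sum_{j=1}^N\langle u,e_j\rangle e_j\|<\epsilon/2$, and then choose rational-complex $q_1,\dots,q_N$ with $\sum_{j=1}^N|\langle u,e_j\rangle-q_j|^2<(\epsilon/2)^2$. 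Orthonormality turns the second estimate into $\|\sum_{j=1}^N(\langle u,e_j\rangle-q_j)e_j\|<\epsilon/2$, and the triangle inequality finishes the argument.

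For the converse, suppose $V$ is separable with countable dense set $D=\{x_1,x_2,\dots\}$. First I would extract from $D$ a linearly independent sequence with the same span by a greedy procedure: discard every $x_i$ that lies in the span of its predecessors, keeping the rest as $(y_k)_{k=1}^{\infty}$, so that $\SP(\{y_1,y_2,\dots\})=\SP(D)$ and each finite initial segment is linearly independent. A point that needs justification is that infinitely many vectors survive: if only finitely many did, then $\SP(D)$ would be finite-dimensional, hence closed by the theorem on finite-dimensional subspaces, and then $V=\overline{D}\subseteq\SP(D)$ would force $V$ to be finite-dimensional, contradicting the hypothesis.

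Next I would apply the Gram-Schmidt process (Theorem \ref{hdgtyu9276d}) to $(y_k)_{k=1}^{\infty}$ to obtain an orthonormal sequence $(e_k)_{k=1}^{\infty}$ with $\SP(\{e_1,e_2,\dots\})=\SP(\{y_1,y_2,\dots\})=\SP(D)$, which is therefore dense in $V$. It remains to verify completeness in the sense of Definition \ref{7780sh64d}. For this I would first record the best-approximation identity
$$\Big\|u-\sum_{j=1}^n c_je_j\Big\|^2=\|u\|^2-\sum_{j=1}^n|\langle u,e_j\rangle|^2+\sum_{j=1}^n|c_j-\langle u,e_j\rangle|^2,$$
valid for any scalars $c_1,\dots,c_n$, which shows that among all vectors in $\SP(\{e_1,\dots,e_n\})$ the partial sum $s_n=\sum_{j=1}^n\langle u,e_j\rangle e_j$ is the closest to $u$. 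Given $u\in V$ and $\epsilon>0$, density of $\SP(D)$ provides some $v=\sum_{j=1}^N\beta_je_j$ with $\|u-v\|<\epsilon$; since $v\in\SP(\{e_1,\dots,e_n\})$ for every $n\geq N$, the best-approximation property yields $\|u-s_n\|\leq\|u-v\|<\epsilon$ for all such $n$. Hence $s_n\to u$, which is exactly the statement that $(e_k)_{k=1}^{\infty}$ is complete.

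The main obstacle I anticipate is the converse direction, and within it the completeness verification: density of the span only provides approximation by arbitrary finite linear combinations, and the crucial step is to convert this into convergence of the specific Fourier partial sums $s_n$. The best-approximation identity is what bridges this gap, and it is worth noting that no appeal to completeness of $V$ (that is, to $V$ being a Hilbert space) is needed, since convergence of $(s_n)_{n=1}^{\infty}$ is established directly rather than through a Cauchy argument.
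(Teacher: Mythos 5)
Your proposal is correct, but there is nothing in the paper to compare it against: the paper does not prove this theorem at all, deferring instead to Proposition 1.3.11 of Bennewitz et al.\ \cite{MR4199125}. Your argument is the standard one and has the merit of being self-contained within the paper's own toolkit: the forward direction uses rational-coefficient approximation of the Fourier partial sums, and the converse uses the greedy extraction of a linearly independent subsequence from the dense set (with the correct justification, via closedness of finite-dimensional subspaces, that infinitely many vectors survive), followed by Gram--Schmidt (Theorem \ref{hdgtyu9276d}) and the best-approximation identity. Two points in your write-up deserve emphasis. First, the best-approximation identity is exactly the right bridge from ``the span of $(e_k)_{k=1}^{\infty}$ is dense'' to ``the specific partial sums $s_n=\sum_{j=1}^n\langle u,e_j\rangle e_j$ converge to $u$''; this step is where careless proofs go wrong. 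Second, your closing observation that completeness of $V$ is never invoked is not a cosmetic remark but a logical necessity: the theorem is stated for inner product spaces, not Hilbert spaces, so any attempt to produce $\sum_{j=1}^{\infty}\langle u,e_j\rangle e_j$ by a Cauchy-sequence argument (as in the second statement of Theorem \ref{6387ydjw}, which explicitly assumes a Hilbert space) would be illegitimate here. Your route avoids that trap by exhibiting the limit $u$ directly.
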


\begin{proof}
For the proof, see Proposition 1.3.11 in Bennewitz et al. \cite{MR4199125}.
\end{proof}

Suppose that $V$ is an inner product space and that $S$ is a nonempty subset of $V$. A vector $u\in V$ is said to be orthogonal to $S$ if $\langle u,s\rangle=0$ for all $s\in S$.

\begin{thm}\label{11245982jhh}
Let $V$ be an inner product space and let $S$ be a subspace of $V$. The following statements hold:
\begin{enumerate}[leftmargin=.65cm]
\item $\overline{S}$ is a subspace of $V$.
\item If a vector $u\in V$ is orthogonal to $S$, then it is orthogonal to $\overline{S}$.
\end{enumerate}
\end{thm}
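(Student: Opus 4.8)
The plan is to exploit the fact that the norm $\|\cdot\|$ induces a metric topology on $V$, so that the closure $\overline{S}$ coincides with the sequential closure: a vector $x$ lies in $\overline{S}$ precisely when there is a sequence $(x_n)_{n=1}^{\infty}$ in $S$ with $x_n\to x$. Establishing this sequential description first is the one step that genuinely needs care, and I expect it to be the main (though modest) obstacle. I would justify it by recalling that, by the definition of the induced topology, every point of $\overline{S}$ has each of its basic open balls meeting $S$; choosing $\epsilon=1/n$ then extracts a sequence in $S$ converging to that point, while conversely any limit of such a sequence lies in $\overline{S}$. Once this characterization is in hand, both parts reduce to routine limit arguments using the continuity of the vector-space operations and of the inner product.

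For the first claim, I would first observe that $0\in S\subseteq\overline{S}$, so $\overline{S}$ is nonempty. Given $x,y\in\overline{S}$ and $\alpha,\beta\in\mathbb{C}$, I would choose sequences $(x_n)_{n=1}^{\infty}$ and $(y_n)_{n=1}^{\infty}$ in $S$ converging to $x$ and $y$ respectively. Since $S$ is a subspace, $\alpha x_n+\beta y_n\in S$ for every $n$, and the estimate
$$\|(\alpha x_n+\beta y_n)-(\alpha x+\beta y)\|\leq|\alpha|\,\|x_n-x\|+|\beta|\,\|y_n-y\|,$$
obtained from the triangle inequality together with $\|\alpha u\|=|\alpha|\|u\|$, shows that $\alpha x_n+\beta y_n\to\alpha x+\beta y$. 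Hence $\alpha x+\beta y\in\overline{S}$, and $\overline{S}$ is closed under linear combinations, so it is a subspace of $V$.

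For the second claim, suppose $u\in V$ is orthogonal to $S$, that is, $\langle u,s\rangle=0$ for every $s\in S$. Let $v\in\overline{S}$ be arbitrary and pick a sequence $(s_n)_{n=1}^{\infty}$ in $S$ with $s_n\to v$. Because convergence implies weak convergence (a consequence of the Cauchy--Schwarz inequality), the sequence $(s_n)_{n=1}^{\infty}$ converges weakly to $v$; taking $u$ as the test vector gives $\langle s_n,u\rangle\to\langle v,u\rangle$. But $\langle s_n,u\rangle=\overline{\langle u,s_n\rangle}=0$ for every $n$, whence $\langle v,u\rangle=0$ and therefore $\langle u,v\rangle=\overline{\langle v,u\rangle}=0$. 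Since $v\in\overline{S}$ was arbitrary, $u$ is orthogonal to $\overline{S}$, completing the argument.

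Overall I do not anticipate any serious difficulty: aside from pinning down the sequential description of closure in the norm topology, everything follows from the elementary norm inequalities and the Cauchy--Schwarz estimate already recorded in the excerpt, and in particular no completeness of $V$ is required.
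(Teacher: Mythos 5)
Your proof is correct and takes essentially the same route as the paper: both identify $\overline{S}$ with the set of limits of sequences in $S$ and pass to the limit in the inner product via the Cauchy--Schwarz inequality (your appeal to ``convergence implies weak convergence'' is just that same estimate packaged differently, since the paper's own proof bounds $|\langle u,s\rangle|=|\langle u,s-s_n\rangle|\leq\|u\|\|s-s_n\|$ directly). The only divergence is that you write out the first claim in full, which the paper dismisses as trivial.
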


\begin{proof}
The proof of the first claim is trivial. To prove the second claim, let $s\in\overline{S}$ be given. Then there exists a sequence $\left(s_n\right)_{n=1}^{\infty}$ is $S$ such that $\lim_{n\to\infty}\|s_n-s\|=0$. Then $|\langle u,s\rangle|=|\langle u,s-s_n\rangle|\leq\|u\|\|s-s_n\|$ for each $n\in\mathbb{N}$. Hence $\langle u,s\rangle=0$.
\end{proof}

\begin{thm}\label{86w53gbd763}
Let $H$ be an infinite-dimensional separable Hilbert space with a complete orthonormal sequence $(\theta_n)_{n=1}^{\infty}$. If $(y_n)_{n=1}^{\infty}$ is a sequence of unit vectors in $H$ such that $\langle y_n,\theta_j\rangle=0$ for all $j\leq n$ in $\mathbb{N}$, then $(y_n)_{n=1}^{\infty}$ converges weakly to zero.
\end{thm}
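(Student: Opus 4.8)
The plan is to fix an arbitrary $y\in H$ and show directly that $\langle y_n,y\rangle\to 0$ as $n\to\infty$, which is precisely the assertion that $(y_n)$ converges weakly to zero. First I would expand $y$ along the complete orthonormal sequence: by Definition \ref{7780sh64d} we may write $y=\sum_{j=1}^{\infty}c_j\theta_j$ with $c_j=\langle y,\theta_j\rangle$, and by Parseval's formula (Theorem \ref{yduois64582}) the coefficients satisfy $\sum_{j=1}^{\infty}|c_j|^2=\|y\|^2<\infty$. This is the only place I use completeness, and it will be the source of the decay at the end.

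Next I would compute $\langle y,y_n\rangle$ for a fixed $n$ by exploiting continuity of the inner product in its first slot. Since the partial sums $\sum_{j=1}^{N}c_j\theta_j$ converge to $y$ in norm, hence weakly, testing against $y_n$ gives
$$\langle y,y_n\rangle=\sum_{j=1}^{\infty}c_j\langle\theta_j,y_n\rangle.$$
The hypothesis $\langle y_n,\theta_j\rangle=0$ for $j\leq n$ means, after conjugation, that $\langle\theta_j,y_n\rangle=0$ for all $j\leq n$, so the first $n$ terms of this series drop out and
$$\langle y,y_n\rangle=\sum_{j=n+1}^{\infty}c_j\langle\theta_j,y_n\rangle.$$

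The crucial estimate then comes from applying the Cauchy--Schwarz inequality to the tail together with Bessel's inequality. Working with the partial sums and then letting the upper limit tend to infinity yields
$$\left|\langle y,y_n\rangle\right|\leq\left(\sum_{j=n+1}^{\infty}|c_j|^2\right)^{1/2}\left(\sum_{j=n+1}^{\infty}|\langle\theta_j,y_n\rangle|^2\right)^{1/2}.$$
The second factor is at most $\|y_n\|=1$ by Bessel's inequality (Theorem \ref{yduois64582}), since $|\langle\theta_j,y_n\rangle|=|\langle y_n,\theta_j\rangle|$. Hence $|\langle y_n,y\rangle|=|\langle y,y_n\rangle|\leq\left(\sum_{j=n+1}^{\infty}|c_j|^2\right)^{1/2}$.

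Finally, because $\sum_{j=1}^{\infty}|c_j|^2$ converges, its tail tends to zero as $n\to\infty$, forcing $\langle y_n,y\rangle\to 0$; as $y$ was arbitrary, $(y_n)$ converges weakly to zero. The one step requiring care --- and the main obstacle --- is the interplay driving the final estimate: the hypothesis removes exactly the first $n$ coefficients precisely as $n$ grows, so the surviving sum is always a tail of the \emph{fixed} convergent series $\sum_{j}|c_j|^2$. It is this matching of the vanishing index range to the growing $n$ that produces the convergence, while Bessel's inequality simultaneously keeps the $y_n$-dependent factor uniformly bounded by $1$, so that no control on the individual directions of the $y_n$ is needed.
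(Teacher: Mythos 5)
Your proof is correct, but it takes a route different from the paper's. You expand the test vector $y$ in the complete orthonormal sequence, use the hypothesis to delete the first $n$ terms of the resulting series for $\langle y,y_n\rangle$, and then control the surviving tail by Cauchy--Schwarz together with Bessel's inequality applied to $y_n$ against $(\theta_j)_{j>n}$. The paper instead never expands $x$ at all: it forms the auxiliary vector $r_n=\langle x,y_n\rangle y_n+\sum_{j=1}^n\langle x,\theta_j\rangle\theta_j$, observes that $y_n,\theta_1,\dots,\theta_n$ form a single orthonormal system (this is where the hypothesis enters), and extracts from $0\leq\|x-r_n\|^2$ the Bessel-type bound $|\langle x,y_n\rangle|^2+\sum_{j=1}^n|\langle x,\theta_j\rangle|^2\leq\|x\|^2$; completeness (Parseval) then squeezes $|\langle x,y_n\rangle|^2$ to zero. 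The two arguments land on the same quantitative estimate, $|\langle x,y_n\rangle|^2\leq\sum_{j>n}|\langle x,\theta_j\rangle|^2$, and both invoke completeness only to make that tail vanish, but the mechanisms differ: the paper's computation is purely finite-dimensional algebra (one orthogonal projection and one norm inequality) followed by a single limit, which is why it can advertise itself as a ``direct proof''; your argument requires manipulating infinite series --- in particular, the interchange justifying $\langle y,y_n\rangle=\sum_j c_j\langle\theta_j,y_n\rangle$ and the passage of Cauchy--Schwarz from partial sums to the infinite tail --- both of which you handle correctly, the first via weak convergence of the partial sums and the second via the limiting argument you indicate. What your version buys is transparency: it isolates exactly how the growth of the annihilated index range matches the shrinking tail of a fixed convergent series, with the $y_n$-dependence bounded uniformly by $1$; what the paper's version buys is economy of prerequisites, since it needs no convergence interchange at all.
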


\begin{proof}
This result is a consequence of Bessel's inequality, but instead, we give a direct proof. Given $x\in H$, the goal is to show that $\lim_{n\to\infty}\langle x,y_n\rangle=0$. For each $n\in\mathbb{N}$, set $r_n=\langle x,y_n\rangle y_n+\sum_{j=1}^n\langle x,\theta_j\rangle\theta_j$. Then 
$$\langle r_n,x\rangle=|\langle x,y_n\rangle|^2+\sum_{j=1}^n|\langle x,\theta_j\rangle|^2=\|r_n\|^2,$$
where the last equality follows from the fact that $y_n,\theta_1,\theta_2,\dots,\theta_n$ are orthonormal vectors. We conclude that $\langle r_n,x\rangle=\langle x,r_n\rangle=\|r_n\|^2$.
Thus
$$
0\leq\|x-r_n\|^2=\|x\|^2-\langle x,r_n\rangle-\langle r_n,x\rangle+\|r_n\|^2=\|x\|^2-\|r_n\|^2.$$
Hence, for all $n\in\mathbb{N}$, we have
\begin{equation}\label{76388dh65d76}
\|r_n\|^2=|\langle x,y_n\rangle|^2+\sum_{j=1}^n|\langle x,\theta_j\rangle|^2\leq\|x\|^2.
\end{equation}
Note that $\sum_{j=1}^{\infty}|\langle x,\theta_j\rangle|^2=\|x\|^2$ since $(\theta_n)_{n=1}^{\infty}$ is a complete orthonormal sequence, see Definition \ref{7780sh64d} and the second statement in Theorem \ref{yduois64582}. Hence, letting $n\to\infty$ in inequality \eqref{76388dh65d76} gives $\lim_{n\to\infty}\langle x,y_n\rangle=0$, which completes the proof.
\end{proof}

\begin{thm}\label{1122092}
Let $H$ be a separable Hilbert space. If $(x_n)_{n=1}^{\infty}$ is a bounded sequence in $H$, then it has a weakly convergent subsequence. In other words, there exist $x\in H$ and a strictly increasing sequence $(j_n)_{n=1}^{\infty}$ in $\mathbb{N}$ such that $(x_{j_n})_{n=1}^{\infty}$ converges weakly to $x$.
\end{thm}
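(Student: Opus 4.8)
The plan is to combine a diagonal extraction with the structure provided by a complete orthonormal sequence. Since $H$ is separable and infinite-dimensional, it contains a complete orthonormal sequence $(\theta_k)_{k=1}^{\infty}$; if instead $H$ is finite-dimensional, weak and norm convergence coincide and the claim is immediate from the Bolzano--Weierstrass theorem, so I focus on the infinite-dimensional case. Let $M>0$ satisfy $\|x_n\|\leq M$ for all $n$. For each fixed $k$, the scalar sequence $(\langle x_n,\theta_k\rangle)_{n=1}^{\infty}$ is bounded in $\mathbb{C}$ by $M$ via Cauchy--Schwarz, so it admits a convergent subsequence.

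First I would run a diagonal argument. Using Bolzano--Weierstrass repeatedly, I extract nested subsequences of $(x_n)_{n=1}^{\infty}$ so that along the $k$-th subsequence the inner products with $\theta_1,\dots,\theta_k$ all converge; the diagonal subsequence, which I denote $(x_{j_n})_{n=1}^{\infty}$, then has the property that $c_k:=\lim_{n\to\infty}\langle x_{j_n},\theta_k\rangle$ exists for every $k\in\mathbb{N}$. Next I would produce the candidate weak limit. For every $N\in\mathbb{N}$, Bessel's inequality (Theorem \ref{yduois64582}) gives $\sum_{k=1}^{N}|\langle x_{j_n},\theta_k\rangle|^2\leq\|x_{j_n}\|^2\leq M^2$, and letting $n\to\infty$ yields $\sum_{k=1}^{N}|c_k|^2\leq M^2$; hence $\sum_{k=1}^{\infty}|c_k|^2\leq M^2<\infty$. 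By the second statement in Theorem \ref{6387ydjw}, the series $x:=\sum_{k=1}^{\infty}c_k\theta_k$ converges in $H$, and by the first statement $\langle x,\theta_k\rangle=c_k$ for every $k$, with $\|x\|\leq M$.

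Finally I would upgrade convergence against the $\theta_k$ to weak convergence against all of $H$. By construction $\langle x_{j_n},\theta_k\rangle\to c_k=\langle x,\theta_k\rangle$ for each $k$, and hence $\langle x_{j_n},z\rangle\to\langle x,z\rangle$ for every finite linear combination $z$ of the $\theta_k$ by linearity of the inner product. For an arbitrary $y\in H$, completeness of $(\theta_k)_{k=1}^{\infty}$ (Definition \ref{7780sh64d}) lets me choose a partial sum $z=\sum_{k=1}^{N}\langle y,\theta_k\rangle\theta_k$ with $\|y-z\|<\epsilon$, and then split
\[
|\langle x_{j_n}-x,y\rangle|\leq|\langle x_{j_n}-x,z\rangle|+\|x_{j_n}-x\|\,\|y-z\|\leq|\langle x_{j_n}-x,z\rangle|+2M\epsilon .
\]
Since the first term tends to $0$ as $n\to\infty$, I obtain $\limsup_{n\to\infty}|\langle x_{j_n}-x,y\rangle|\leq 2M\epsilon$, and letting $\epsilon\to0$ shows that $(x_{j_n})_{n=1}^{\infty}$ converges weakly to $x$.

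The step I expect to be the main obstacle is the diagonal extraction together with the uniform estimate $\|x_{j_n}-x\|\leq 2M$ that makes the final approximation work: the convergence $\langle x_{j_n},\theta_k\rangle\to c_k$ is only guaranteed coordinatewise, and the delicate point is to leverage the uniform bound on the tail (via Bessel) and the density of finite combinations of the $\theta_k$ so that the error incurred by replacing $y$ with $z$ is controlled uniformly in $n$. Once that uniform control is in place, the passage to the limit is routine.
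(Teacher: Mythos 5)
Your proof is correct, but it takes a different route from the paper in one basic sense: the paper does not prove Theorem \ref{1122092} at all, it simply cites Theorem 2.1.11 of Bennewitz et al.\ \cite{MR4199125}. Your argument is the canonical diagonal-extraction proof, and its value here is that it is entirely self-contained relative to the rest of the paper: Cauchy--Schwarz bounds each coordinate sequence $(\langle x_n,\theta_k\rangle)_{n=1}^{\infty}$, Bolzano--Weierstrass plus diagonalization yields a subsequence whose coordinates all converge to limits $c_k$, Bessel's inequality (Theorem \ref{yduois64582}) gives $\sum_{k=1}^{\infty}|c_k|^2\leq M^2$, Theorem \ref{6387ydjw} assembles the candidate limit $x=\sum_{k=1}^{\infty}c_k\theta_k$, and the density of partial sums coming from completeness of $(\theta_k)_{k=1}^{\infty}$ (Definition \ref{7780sh64d}), together with the uniform bound $\|x_{j_n}-x\|\leq 2M$, upgrades coordinatewise convergence to weak convergence. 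Two minor points to tighten. First, you assert $\|x\|\leq M$ without justification; it follows from Parseval's formula (second statement of Theorem \ref{yduois64582}), since $x=\sum_{k=1}^{\infty}c_k\theta_k$ with $\langle x,\theta_k\rangle=c_k$ forces $\|x\|^2=\sum_{k=1}^{\infty}|c_k|^2\leq M^2$; alternatively $\|x\|\leq 2M$ would do and follows the same way. Second, your separate treatment of the finite-dimensional case is not cosmetic but genuinely needed, because the paper's notion of a complete orthonormal sequence is defined only for infinite-dimensional spaces; it is good that you noticed this, though you should say explicitly that in finite dimensions weak convergence against a basis already implies norm-subsequential convergence via Bolzano--Weierstrass in $\mathbb{C}^d$. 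What your route buys is independence from the external reference, using only tools already established in the paper; what the citation buys the paper is brevity for a standard fact that plays only a background role (it is the implicit justification for the ``without loss of generality'' step attributed to Khalil et al.\ in the Fourth Claim).
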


\begin{proof}
For the proof, see Theorem 2.1.11 in Bennewitz et al. \cite{MR4199125}.
\end{proof}

\begin{thm}\label{htsio937ys}
Let $\left(e_j\right)_{j=1}^{\infty}$ be an orthonormal sequence in an infinite-dimensional Hilbert space $H$, and set $S=\SP\left(\{e_1,e_2,e_3,\dots\}\right)$. Then
$$\overline{S}=\left\{\sum_{j=1}^{\infty}\alpha_je_j:\left(\alpha_j\right)_{j=1}^{\infty}\text{ is a sequence in $\mathbb{C}$ satisfiying }\sum_{j=1}^{\infty}|\alpha_j|^2<\infty\right\}.$$
\end{thm}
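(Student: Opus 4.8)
The plan is to prove the set equality by establishing the two inclusions separately. Write $T$ for the right-hand side, the set of all sums $\sum_{j=1}^{\infty}\alpha_je_j$ with $\sum_{j=1}^{\infty}|\alpha_j|^2<\infty$. I would first record that every such series actually converges in $H$: this is precisely the second statement of Theorem \ref{6387ydjw}, which applies because $H$ is a Hilbert space. Thus $T$ is a well-defined subset of $H$, and the inclusions $T\subseteq\overline{S}$ and $\overline{S}\subseteq T$ are meaningful.

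For the inclusion $T\subseteq\overline{S}$, I would take $u=\sum_{j=1}^{\infty}\alpha_je_j\in T$ and note that the partial sums $s_n=\sum_{j=1}^{n}\alpha_je_j$ are finite linear combinations of the $e_j$, hence members of $S$. Since $s_n\to u$ by the definition of convergence of a series, $u$ is a limit of elements of $S$, so $u\in\overline{S}$. This direction is routine.

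The reverse inclusion $\overline{S}\subseteq T$ is the substantive part. Given $u\in\overline{S}$, I would set $\alpha_j=\langle u,e_j\rangle$; Bessel's inequality, the first statement of Theorem \ref{yduois64582}, guarantees $\sum_{j=1}^{\infty}|\alpha_j|^2\leq\|u\|^2<\infty$, so by Theorem \ref{6387ydjw} the series $v:=\sum_{j=1}^{\infty}\alpha_je_j$ converges to some $v\in H$, and plainly $v\in T$. It then remains to prove $u=v$, which is the heart of the matter.

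I expect this last equality to be the only delicate point, and I would obtain it by an orthogonality argument. By the first statement of Theorem \ref{6387ydjw}, $\langle v,e_m\rangle=\alpha_m=\langle u,e_m\rangle$ for every $m$, so $u-v$ is orthogonal to each $e_m$ and hence to all of $S$. By the second statement of Theorem \ref{11245982jhh}, $u-v$ is therefore orthogonal to $\overline{S}$. On the other hand, $u\in\overline{S}$ by hypothesis, and $v\in\overline{S}$ because $v\in T$ and the first inclusion has already been established; since $\overline{S}$ is a subspace by the first statement of Theorem \ref{11245982jhh}, it follows that $u-v\in\overline{S}$. A vector lying in $\overline{S}$ while being orthogonal to $\overline{S}$ is orthogonal to itself, which forces $u-v=0$. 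Hence $u=v\in T$, completing the inclusion and the proof. The point requiring care is simply to have the inclusion $T\subseteq\overline{S}$ in hand first, so that $v\in\overline{S}$ is available when the self-orthogonality step is applied.
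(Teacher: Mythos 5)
Your proposal is correct, and it diverges from the paper's proof at exactly the point you identified as the heart of the matter: showing $u=\sum_{j=1}^{\infty}\langle u,e_j\rangle e_j$ for $u\in\overline{S}$. The paper does this by a direct $\epsilon$-estimate: it picks $s\in S$ with $2\|x-s\|<\epsilon$, observes that $s$ equals its own finite Fourier expansion $\sum_{j=1}^{p}\langle s,e_j\rangle e_j$, and then bounds $\|x-\sum_{j=1}^{n}\langle x,e_j\rangle e_j\|$ for $n\geq p$ by $\|x-s\|+\bigl(\sum_{j=1}^{\infty}|\langle s-x,e_j\rangle|^2\bigr)^{1/2}\leq 2\|x-s\|$, using Bessel's inequality applied to $s-x$. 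You instead argue structurally: the difference $u-v$ between $u$ and its Fourier series $v$ is orthogonal to every $e_m$ (by the first statement of Theorem \ref{6387ydjw}), hence orthogonal to $S$, hence orthogonal to $\overline{S}$ by Theorem \ref{11245982jhh}; since both $u$ and $v$ lie in $\overline{S}$ (the latter via the already-proved inclusion $T\subseteq\overline{S}$) and $\overline{S}$ is a subspace, $u-v$ is orthogonal to itself and so vanishes. Your ordering of steps is sound — you correctly flag that $T\subseteq\overline{S}$ must come first so that $v\in\overline{S}$ is available. As for trade-offs: the paper's argument is quantitative and needs only Bessel's inequality on top of the well-definedness result, making it self-contained modulo Theorem \ref{6387ydjw}; your argument trades the explicit estimate for the cleaner ``vector in a subspace orthogonal to that subspace is zero'' mechanism, at the cost of invoking both statements of Theorem \ref{11245982jhh} and the uniqueness-of-coefficients part of Theorem \ref{6387ydjw}. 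Both are legitimate and standard; yours is arguably the more conceptual route, while the paper's generalizes more readily to settings where one wants explicit rates of approximation by partial sums.
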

\begin{proof}
First, denote the set on the right-hand side by $A$. Recall that the second statement in Theorem \ref{6387ydjw} informs us that if $\left(\alpha_j\right)_{j=1}^{\infty}$ is a sequence in $\mathbb{C}$ satisfying $\sum_{j=1}^{\infty}|\alpha_j|^2<\infty$, then $\sum_{j=1}^{\infty}\alpha_je_j$ is convergent. Therefore, the set $A$ is a well-defined subset of $H$.

It is clear that $A\subset\overline{S}$. To show the reverse inclusion, let $x\in\overline{S}$ be given. Bessel's inequality informs us that $\sum_{j=1}^{\infty}|\langle x,e_j\rangle|^2\leq\|x\|^2<\infty$. Hence $\sum_{j=1}^{\infty}\langle x,e_j\rangle e_j$ is in $A$. We claim that $x=\sum_{j=1}^{\infty}\langle x,e_j\rangle e_j$, which completes the proof of the theorem.

Proving this claim requires showing that $\lim_{n\to\infty}\|x-\sum_{j=1}^n\langle x,e_j\rangle e_j\|=0$, so let $\epsilon>0$ be given. Since $x\in\overline{S}$, there exists $s\in S$ such that $2\|x-s\|<\epsilon$. Also, $s\in S$ informs us that $s=\sum_{j=1}^{p}\langle s,e_j\rangle e_j$ for some $p\in\mathbb{N}$. Note that $s=\sum_{j=1}^{n}\langle s,e_j\rangle e_j$ for all $n\geq p$ since $\langle s,e_j\rangle=0$ for all $j>p$.

Now for all $n\geq p$, we have
$$
\begin{aligned}
\Bigg\|x-\sum_{j=1}^n\langle x,e_j\rangle e_j\Bigg\|
&
\leq\left\|x-s\right\|+\Bigg\|s-\sum_{j=1}^n\langle x,e_j\rangle e_j\Bigg\|\\
&
=\left\|x-s\right\|+\Bigg\|\sum_{j=1}^n\langle s,e_j\rangle e_j-\sum_{j=1}^n\langle x,e_j\rangle e_j\Bigg\|\\
&
=\left\|x-s\right\|+\Bigg\|\sum_{j=1}^n\langle s-x,e_j\rangle e_j\Bigg\|\\
&
=\left\|x-s\right\|+\sqrt{\sum_{j=1}^n|\langle s-x,e_j\rangle|^2}\\
&
\leq\left\|x-s\right\|+\sqrt{\sum_{j=1}^{\infty}|\langle s-x,e_j\rangle|^2}.
\end{aligned}$$
Note that $\sum_{j=1}^{\infty}|\langle s-x,e_j\rangle|^2\leq\|s-x\|^2$, according to Bessel's inequality. It follows that
$$\Bigg\|x-\sum_{j=1}^n\langle x,e_j\rangle e_j\Bigg\|<\epsilon$$
for all $n\geq p$, which completes the proof.
\end{proof}

\begin{defn}[Invariant Subspaces]
Let $V$ be a vector space and $T:V\to V$ a linear operator. A subspace $S$ of $V$ is said to be invariant under $T$ if $T(S)\subset S$.

Observe that $\{0\}$, $V$, and $\text{Ker}(T)$ (the kernel of $T$) are invariant subspaces of $V$ under $T$. By the trivial subspaces of $T$ we mean $\{0\}$ and $V$.
\end{defn}

\begin{thm}\label{7640hdy3}
Suppose that $V$ is an inner product space of dimension $n\geq 2$, allowing $n=\infty$. If $T:V\to V$ is a nonzero bounded linear operator that is not one-to-one, then $\text{Ker}(T)$ is a closed nontrivial invariant subspace of $V$ under $T$.
\end{thm}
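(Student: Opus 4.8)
The plan is to verify the three asserted properties of $\text{Ker}(T)$ in turn: invariance, nontriviality, and closedness. The invariance is immediate and was in fact already noted in the definition of invariant subspaces: if $u\in\text{Ker}(T)$, then $T(u)=0$, and since $0\in\text{Ker}(T)$, we have $T(\text{Ker}(T))=\{0\}\subset\text{Ker}(T)$. So the substance of the theorem lies entirely in showing that the kernel is neither of the two trivial subspaces and that it is topologically closed.

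For nontriviality I would argue the two exclusions separately, each using exactly one of the two hypotheses on $T$. To see that $\text{Ker}(T)\neq V$, I would use that $T$ is nonzero: were $\text{Ker}(T)=V$, then $T(u)=0$ for every $u\in V$, which would say $T=0$ and contradict the hypothesis. To see that $\text{Ker}(T)\neq\{0\}$, I would use that $T$ is not one-to-one: there exist distinct $u,v\in V$ with $T(u)=T(v)$, whence by linearity $T(u-v)=0$ with $u-v\neq 0$, so $u-v$ is a nonzero element of $\text{Ker}(T)$.

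The only step that genuinely invokes the boundedness of $T$ is closedness, and this is the main point of the theorem. I would show $\text{Ker}(T)$ is closed by taking an arbitrary sequence $(x_n)_{n=1}^{\infty}$ in $\text{Ker}(T)$ converging to some $x\in V$ and checking that $x\in\text{Ker}(T)$. Since $T$ is bounded, there is a constant $C\geq 0$ with $\|T(w)\|\leq C\|w\|$ for all $w\in V$; applying this to $w=x-x_n$ and using linearity together with $T(x_n)=0$ gives $\|T(x)\|=\|T(x)-T(x_n)\|=\|T(x-x_n)\|\leq C\|x-x_n\|$. Letting $n\to\infty$ forces $\|T(x)\|=0$, so $T(x)=0$ and $x\in\text{Ker}(T)$. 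I do not anticipate any real obstacle here; the hypothesis $n\geq 2$ plays no role in the argument beyond guaranteeing that ``nonzero'' and ``not one-to-one'' can hold simultaneously, since on a one-dimensional space every nonzero operator is injective.
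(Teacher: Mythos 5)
Your proposal is correct and follows essentially the same route as the paper: the closedness argument via $\|T(x)\|=\|T(x-x_n)\|\leq C\|x-x_n\|\to 0$ is identical, and the paper simply treats the invariance and nontriviality (which you spell out from the hypotheses ``nonzero'' and ``not one-to-one'') as immediate. Your additional remark about the role of $n\geq 2$ is a harmless elaboration, not a deviation.
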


\begin{proof}
Since $\text{Ker}(T)$ is an invariant subspace that is neither $\{0\}$ nor $V$, we need only to show that $\text{Ker}(T)$ is closed. Suppose that $(x_n)_{n=1}^{\infty}$ is a sequence in $\text{Ker}(T)$ that converges to $x\in V$. The goal is to show that $x\in\text{Ker}(T)$. It follows from the boundedness of $T$ that there is $M>0$ such that $\|T(x)\|=\|T(x-x_n)\|\leq M\|x-x_n\|$ for all $n\in\mathbb{N}$. Letting $n\to\infty$ gives $\|T(x)\|=0$, which completes the proof.
\end{proof}

\begin{thm}[Invariant Subspaces of Finite-Dimensional Inner Product Spaces]\label{gsutdb11}
If $V$ is an inner product space of finite-dimension $n\geq 2$ and $T:V\to V$ is a linear operator, then there exists a nontrivial subspace of $V$ that is invariant under $T$.
\end{thm}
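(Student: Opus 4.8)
The plan is to exploit the fact that $V$ is a vector space over $\mathbb{C}$: over the complex field every linear operator on a nonzero finite-dimensional space has an eigenvalue, and a single eigenvector already generates a one-dimensional invariant subspace. That subspace will turn out to be nontrivial precisely because $n\geq2$. Note that the inner product plays no role here; only the complex vector space structure and finite dimensionality matter.

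First I would produce an eigenvalue. Fixing a basis of $V$, the operator $T$ is represented by an $n\times n$ complex matrix, and its characteristic polynomial $p(\lambda)=\det(T-\lambda I)$ is a polynomial of degree $n\geq2$ with complex coefficients. Being nonconstant, $p$ has a root $\lambda_0\in\mathbb{C}$ by the fundamental theorem of algebra. For this $\lambda_0$ the operator $T-\lambda_0 I$ has zero determinant, hence fails to be injective, so there exists a nonzero vector $v\in V$ with $T(v)=\lambda_0 v$.

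Next I would check that $\SP\left(\{v\}\right)$ is invariant under $T$: an arbitrary element has the form $\alpha v$ for some $\alpha\in\mathbb{C}$, and $T(\alpha v)=\alpha T(v)=(\alpha\lambda_0)v\in\SP\left(\{v\}\right)$. Then I would verify nontriviality. Since $v\neq0$, the subspace $\SP\left(\{v\}\right)$ is not $\{0\}$; and since it is one-dimensional while $\dim V=n\geq2$, it is a proper subspace, hence distinct from $V$. Therefore $\SP\left(\{v\}\right)$ is a nontrivial subspace of $V$ invariant under $T$.

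The main obstacle is the very first step, the existence of the eigenvalue, and this is where the hypothesis that the scalar field is $\mathbb{C}$ is indispensable: the argument rests entirely on the algebraic closedness of $\mathbb{C}$ via the fundamental theorem of algebra. Over $\mathbb{R}$ the conclusion would fail for a one-dimensional eigenspace argument, as a planar rotation has no real eigenvalue. Everything following the production of the eigenvector is routine.
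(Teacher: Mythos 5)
Your proof is correct. There is, however, nothing in the paper to compare it against: Theorem \ref{gsutdb11} is stated there without proof, evidently as a standard fact from linear algebra. Your argument --- pass to the characteristic polynomial, invoke the fundamental theorem of algebra to get an eigenvalue $\lambda_0$, take a nonzero $v$ with $T(v)=\lambda_0 v$, and observe that $\SP\left(\{v\}\right)$ is invariant, nonzero, and proper because $\dim V=n\geq 2$ --- is exactly the standard proof the authors presumably had in mind, and your remark that algebraic closedness of $\mathbb{C}$ is the essential hypothesis (a rotation of $\mathbb{R}^2$ has no real eigenvalue) correctly identifies why the statement is special to complex scalars. One small bonus you could add for free: by the paper's theorem on finite-dimensional subspaces, $\SP\left(\{v\}\right)$ is automatically closed, so the conclusion can be strengthened to a \emph{closed} nontrivial invariant subspace, which is the form relevant to the invariant subspace problem discussed in the second section.
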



\begin{thm}\label{sdbwiku73}
Let $H$ be a Hilbert space and $T:H\to H$ a bounded linear operator. If $S$ is an invariant subspace of $H$, then so is $\overline{S}$.
\end{thm}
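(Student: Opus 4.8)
The plan is to establish the forward inclusion $T(\overline{S})\subset\overline{S}$ directly, using the sequential description of the closure together with the continuity that boundedness of $T$ supplies. First I would recall that $\overline{S}$ is already known to be a subspace of $H$ by the first statement in Theorem \ref{11245982jhh}, so the only thing left to verify is invariance, namely that $T(x)\in\overline{S}$ for every $x\in\overline{S}$.

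Next I would fix an arbitrary $x\in\overline{S}$. By the definition of the closure there is a sequence $(s_n)_{n=1}^{\infty}$ in $S$ with $\lim_{n\to\infty}\|s_n-x\|=0$. Since $S$ is invariant under $T$, each $T(s_n)$ lies in $S$. The decisive step is to observe that boundedness of $T$ forces $T(s_n)\to T(x)$: there is $M>0$ such that
$$\|T(s_n)-T(x)\|=\|T(s_n-x)\|\leq M\|s_n-x\|$$
for all $n\in\mathbb{N}$, and the right-hand side tends to $0$. Hence $T(x)$ is the limit of the sequence $(T(s_n))_{n=1}^{\infty}$, whose terms all lie in $S$, which places $T(x)$ in $\overline{S}$.

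I expect no genuine obstacle here; the single point requiring care is the passage from boundedness to continuity, which is precisely the estimate displayed above and mirrors the reasoning already carried out in the proof of Theorem \ref{7640hdy3}. Because $x\in\overline{S}$ was arbitrary, this yields $T(\overline{S})\subset\overline{S}$, and together with the fact that $\overline{S}$ is a subspace this completes the proof.
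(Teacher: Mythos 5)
Your proof is correct and follows essentially the same route as the paper's: approximate $x\in\overline{S}$ by a sequence in $S$, use boundedness of $T$ to get $T(s_n)\to T(x)$, and conclude $T(x)\in\overline{S}$ since each $T(s_n)$ lies in $S$. Your explicit citation of Theorem \ref{11245982jhh} for $\overline{S}$ being a subspace and the displayed estimate with the constant $M$ are merely more detailed renderings of steps the paper leaves implicit.
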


\begin{proof}
It is given that $T(S)\subset S$, and the goal is to show that $T\left(\overline{S}\right)\subset\overline{S}$. Let $y\in T(\overline{S})$ be given. Then $y=T(x)$ for some $x\in \overline{S}$. Since $x\in \overline{S}$, there exists a sequence $\left(x_n\right)_{n=1}^{\infty}$ in $S$ such that $\lim_{n\to\infty}\|x_n-x\|=0$. But $T$ is bounded, so $0=\lim_{n\to\infty}\|T(x_n)-T(x)\|=\lim_{n\to\infty}\|T(x_n)-y\|$. Observe that the sequence $\left(T(x_n)\right)_{n=1}^{\infty}$ is in $S$ since $T(S)\subset S$. Thus we conclude that $y\in\overline{S}$.
\end{proof}

\section{The Proof by Khalil Et Al.}

Here is the formulation of the invariant subspace problem.

\begin{isp}
If $H$ is an infinite-dimensional separable Hilbert space and $T:H\to H$ is a bounded linear operator, then $H$ has a closed nontrivial invariant subspace under $T$.
\end{isp}

\textcolor{blue}{We will analyze the recent proof provided by Khalil et al. \cite{axioms13090598} up to the point where we demonstrate why it is incorrect. This analysis will involve breaking the proof down into individual claims. Note that the proof we provide for each claim may differ from the proofs given by the authors.}

In view of Theorem \ref{7640hdy3}, we may assume that $\text{Ker}(T)=\{0\}$, i.e., $T$ is one-to-one.

The authors begin the proof by assuming, for the sake of contradiction, that the only closed invariant subspaces of of $H$ under $T$ are $\{0\}$ and $H$.

\begin{center}
\fbox{\begin{minipage}[t]{0.96\textwidth}
\textbf{\textcolor{blue}{First Claim:}} For each nonzero vector $h\in H$, the sequence $\left(T^{n}(h)\right)_{n=0}^{\infty}$ is linearly independent. Note that $T^0$ is the identity operator.
\end{minipage}}
\end{center}

To prove this claim observe that the vector $h$ is linearly independent since it is nonzero. If $h$ and $T(h)$ are linearly dependent, then $T(h)=\alpha h$ for some nonzero scalar $\alpha$. Thus $\SP\left(\{h\}\right)$ is a closed nonzero invariant subspace of $H$ under $T$. Hence $H=\SP\left(\{h\}\right)$, contradicting the fact the $H$ is infinite-dimensional. We conclude that $h$ and $T(h)$ are linearly independent. Now, if the sequence $\left(T^{n}(h)\right)_{n=0}^{\infty}$ is linearly dependent, then there exists $m\geq1$ such that $T^{m+1}(h)=\alpha_0 h+\alpha_1 T(h)+\dots+\alpha_mT^m(h)$ for some scalars $\alpha_0,\alpha_1,\dots,\alpha_m$. Obviously, $S=\SP\left(\{h,T(h),\dots,T^m(h)\}\right)$ is a closed nonzero invariant subspace of $H$ under $T$. This implies that $S=H$, contradicting the fact that $H$ is infinite-dimensional, and hence completes the proof of the above claim.

\begin{center}
\fbox{\begin{minipage}[t]{0.96\textwidth}
\textbf{\textcolor{blue}{Second Claim:}} For any nonzero vector $h\in H$ we have
$$\displaystyle\overline{\SP\left(\left\{h,T(h),T^2(h),\dots\right\}\right)}=H.$$
\end{minipage}}
\end{center}

For each $h\in H$ set $\mathbb{S}(h)=\SP\left(\left\{h,T(h),T^2(h),\dots\right\}\right)$. To prove the above claim, let a nonzero vector $h\in H$ be given. Then $\mathbb{S}(h)$ is a nonzero invariant subspace of $H$ under $T$. It follows from Theorem \ref{sdbwiku73} that $\overline{\mathbb{S}(h)}$ is also a nonzero invariant subspace of $H$ under $T$. But $\overline{\mathbb{S}(h)}$ is closed, so $\overline{\mathbb{S}(h)}=H$.

\begin{center}
\fbox{\begin{minipage}[t]{0.96\textwidth}
\textbf{\textcolor{blue}{Third Claim:}} Given a nonzero vector $x\in H$, there exist an orthonormal sequence $(\theta_n)_{n=1}^{\infty}$ in $H$ and a sequence $(x_n)_{n=1}^{\infty}$ in $H$ such that
\begin{enumerate}[leftmargin=.65cm]
\item $\SP\left(\{\theta_1,\theta_2,\dots,\theta_n\}\right)=\SP\left(\{T(x),T^2(x),\dots,T^n(x)\}\right)$ for each $n\in\mathbb{N}$.
\item $\SP\left(\{\theta_1,\theta_2,\theta_3,\dots\}\right)=\SP\left(\left\{T(x),T^2(x),T^3(x),\dots\right\}\right)$.
\item $x_n\neq 0$ for all $n\in\mathbb{N}$.
\item $\langle x_n, \theta_j\rangle=0$ for all $j\leq n$ in $\mathbb{N}$.
\item $x=x_n+\sum_{j=1}^n\langle x,\theta_j\rangle\theta_j$.
\end{enumerate}

Note that the authors use the notation $a_j=\langle x,\theta_j\rangle$ for each $j\in\mathbb{N}$.
\end{minipage}}
\end{center}

Since $\left(T^n(x)\right)_{n=1}^{\infty}$ is linearly independent (according to the \textbf{\textcolor{blue}{First Claim}}), Theorem \ref{hdgtyu9276d} (Gram-Schmidt process) informs us that there is an an orthonormal sequence $(\theta_n)_{n=1}^{\infty}$ such that
\begin{enumerate}[leftmargin=.65cm]
\item $\SP\left(\{\theta_1,\theta_2,\dots,\theta_n\}\right)=\SP\left(\{T(x),T^2(x),\dots,T^n(x)\}\right)$ for each $n\in\mathbb{N}$, and
\item $\SP\left(\{\theta_1,\theta_2,\theta_3,\dots\}\right)=\SP\left(\left\{T(x),T^2(x),T^3(x),\dots\right\}\right)$.
\end{enumerate}

For each $n\in\mathbb{N}$, set $x_n=x-\sum_{j=1}^n\langle x,\theta_j \rangle \theta_j$. Then it is straightforward to verify that the sequence $(x_n)_{n=1}^{\infty}$ satisfies the last three properties in the \textbf{\textcolor{blue}{Third Claim}}.

\begin{center}
\fbox{\begin{minipage}[t]{0.96\textwidth}
\textbf{\textcolor{blue}{Fourth Claim:}} For each $n\in\mathbb{N}$, set $y_n=x_n/\|x_n\|$. Then, there exists a subsequence $\left(y_{k_n}\right)_{n=1}^{\infty}$ of $\left(y_n\right)_{n=1}^{\infty}$ such that $\left(y_{k_n}\right)_{n=1}^{\infty}$ converges weakly to a nonzero vector $y\in H$.
\end{minipage}}
\end{center}

The \textcolor{blue}{Fourth Claim} is actually \textcolor{red}{\textbf{FALSE}}. The authors rely on this claim to obtain a contradiction. They begin their proof of the above claim by assuming, without loss of generality, that $\left(y_n\right)_{n=1}^{\infty}$ weakly converges to $y$. They then claim that a bounded linear functional $f:H\to\mathbb{R}$ that satisfies $f(y_n)=2\left(1-2^{-n}\right)$ for all $n\in\mathbb{N}$, can be constructed using $(\theta_n)_{n=1}^{\infty}$ and $(y_n)_{n=1}^{\infty}$, and nothing more. They proceed to deduce that $y\neq0$ since $f(y)=\lim_{n\to\infty}f(y_n)=2$. What is wrong with their argument is that $f$ is not \textcolor{blue}{well-defined}.

We will refute the \textcolor{blue}{Fourth Claim} by showing that the sequence $\left(y_n\right)_{n=1}^{\infty}$ is forced to converge weakly to zero. Of course, this informs us that every subsequence of $\left(y_n\right)_{n=1}^{\infty}$ is also forced to converge weakly to zero.  We begin by noting that since $T(x)$ is a nonzero vector in $H$, choosing $h=T(x)$ in the \textcolor{blue}{Second Claim} gives
$$H=\overline{\SP\left(\{T(x),T^2(x),T^3(x),\dots\}\right)}=\overline{\SP\left(\{\theta_1,\theta_2,\theta_3,\dots\}\right)}.$$
It follows from Theorem \ref{htsio937ys} that
$$H=\left\{\sum_{j=1}^{\infty}\alpha_j\theta_j:\left(\alpha_j\right)_{j=1}^{\infty}\text{ is a sequence in $\mathbb{C}$ satisfiying }\sum_{j=1}^{\infty}|\alpha_j|^2<\infty\right\}.$$
Thus, if $h\in H$ is given, then $h=\sum_{j=1}^{\infty}\alpha_j\theta_j$ for some sequence $\left(\alpha_j\right)_{j=1}^{\infty}$ in $\mathbb{C}$ that satisfies $\sum_{j=1}^{\infty}|\alpha_j|^2<\infty$. It follows from the first statement in Theorem \ref{6387ydjw} that $\alpha_j=\langle h,\theta_j\rangle$ for all $j\in\mathbb{N}$. This shows that $\left(\theta_j\right)_{j=1}^{\infty}$ is a complete orthonormal sequence in $H$, see Definition \ref{7780sh64d}.

It follows from the fourth statement in the \textcolor{blue}{Third Claim} that $\langle y_n,\theta_j\rangle=0$ for all $j\leq n$ in $\mathbb{N}$. Then Theorem \ref{86w53gbd763} informs us that the sequence $(y_n)_{n=1}^{\infty}$ weakly converges to zero.

\section*{Acknowledgement}
I would like to thank Steven Redolfi and Rudi Weikard for reading this article and providing valuable comments and feedback.


\begin{thebibliography}{1}

\bibitem{2007introductory}
{\em Introductory Functional Analysis with Applications}.
\newblock Wiley classics library. Wiley India Pvt. Limited, 2007.

\bibitem{MR4199125}
Christer Bennewitz, Malcolm Brown, and Rudi Weikard.
\newblock {\em Spectral and scattering theory for ordinary differential
  equations. {V}ol. {I}}.
\newblock Universitext. Springer, Cham, 2020.
\newblock Sturm--Liouville equations.

\bibitem{axioms13090598}
Roshdi Khalil, Abdelrahman Yousef, Waseem~Ghazi Alshanti, and Ma’mon~Abu
  Hammad.
\newblock The invariant subspace problem for separable hilbert spaces.
\newblock {\em Axioms}, 13(9), 2024.

\end{thebibliography}
 \end{document}